\newcommand{\burl}[1]{\textcolor{blue}{\url{#1}}}
\newcommand\be{\begin{equation}}
\newcommand\ee{\end{equation}}
\newcommand\nbea{\begin{eqnarray*}}
\newcommand\neea{\end{eqnarray*}}
\newcommand\bea{\begin{eqnarray}}
\newcommand\eea{\end{eqnarray}}
\newcommand\bi{\begin{itemize}}
\newcommand\ei{\end{itemize}}
\newcommand\ben{\begin{enumerate}}
\newcommand\een{\end{enumerate}}
\newcommand\bc{\begin{center}}
\newcommand\ec{\end{center}}
\newcommand\ba{\begin{array}}
\newcommand\ea{\end{array}}
\newcommand{\E}{\ensuremath{\mathbb{E}}}
\newtheorem{thm}{Theorem}[section]
\newtheorem{cor}[thm]{Corollary}
\newtheorem{lem}[thm]{Lemma}
\newtheorem{defi}[thm]{Definition}
\newcommand{\twocase}[5]{#1 \begin{cases} #2 & \text{{\rm #3}}\\ #4
&\text{{\rm #5}} \end{cases}   }
\renewcommand{\v}{\text{Var}}
\newcommand{\hr}[1]{\href{#1}{\url{#1}}}
\numberwithin{equation}{section}
\title[Gaussian Distribution in Generalized Zeckendorf Decompositions in Small Intervals]{Gaussian Distribution of the Number of Summands in Generalized Zeckendorf Decompositions in Small Intervals}
\author[Best]{Andrew Best}
\email{\textcolor{blue}{\href{mailto:ajb5@williams.edu)}{ajb5@williams.edu}}}
\address{Department of Mathematics and Statistics, Williams College, Williamstown, MA 01267}
\author[Dynes]{Patrick Dynes}
\email{\textcolor{blue}{\href{mailto:pdynes@clemson.edu)}{pdynes@clemson.edu}}}
\address{Department of Mathematical Sciences, Clemson University, Clemson, SC 29634}
\author[edelsbrunner]{Xixi Edelsbrunner}
\email{\textcolor{blue}{\href{mailto:xe1@williams.edu}{xe1@williams.edu}}}
\address{Department of Mathematics and Statistics, Williams College, Williamstown, MA 01267}
\author[McDonald]{Brian McDonald}
\email{\textcolor{blue}{\href{mailto:bmcdon11@u.rochester.edu}{bmcdon11@u.rochester.edu}}}
\address{Department of Mathematics, University of Rochester, Rochester, NY 14627}
\author[Miller]{Steven J. Miller}
\email{\textcolor{blue}{\href{mailto:sjm1@williams.edu}{sjm1@williams.edu}},  \textcolor{blue}{\href{Steven.Miller.MC.96@aya.yale.edu}{Steven.Miller.MC.96@aya.yale.edu}}}
\address{Department of Mathematics and Statistics, Williams College, Williamstown, MA 01267}
\author[Tor]{Kimsy Tor}
\email{\textcolor{blue}{\href{mailto:ktor.student@manhattan.edu}{ktor.student@manhattan.edu}}}
\address{Department of Mathematics, Manhattan College, Riverdale, NY 10471}
\author[Turnage-Butterbaugh]{Caroline Turnage-Butterbaugh}
\email{\textcolor{blue}{\href{mailto:cturnagebutterbaugh@gmail.com}{cturnagebutterbaugh@gmail.com}}}
\address{Department of Mathematics, North Dakota State University, Fargo, ND 58102}
\author[Weinstein]{Madeleine Weinstein}
\email{\textcolor{blue}{\href{mailto:mweinstein@g.hmc.edu}{mweinstein@g.hmc.edu}}}
\address{Department of Mathematics, Harvey Mudd College, Claremont, CA 91711 }
\thanks{This research was conducted as part of the 2014 SMALL REU program at Williams College and was supported by NSF grants DMS1347804 and DMS1265673,  Williams College, and the Clare Boothe Luce Program of the Henry Luce Foundation. It is a pleasure to thank them for  their support, and the participants there and at the 16\textsuperscript{th} International Conference on Fibonacci Numbers and their Applications for helpful discussions.}
\subjclass[2010]{11B39 (primary) 65Q30, 60B10 (secondary)}
\keywords{Zeckendorf's Theorem, Central Limit Type Theorems}
\date{\today}
\begin{document}

\begin{abstract}
Zeckendorf's theorem states that every positive integer can be written uniquely as a sum of non-consecutive Fibonacci numbers ${F_n}$, with initial terms $F_1 = 1, F_2 = 2$. Previous work proved that as $n \to \infty$ the distribution of the number of summands in the Zeckendorf decompositions of $m \in [F_n, F_{n+1})$, appropriately normalized, converges to the standard normal. The proofs crucially used the fact that all integers in $[F_n, F_{n+1})$ share the same potential summands and hold for more general positive linear recurrence sequences $\{G_n\}$.

We generalize these results to subintervals of $[G_n, G_{n+1})$ as $n \to \infty$ for certain sequences. The analysis is significantly more involved here as different integers have different sets of potential summands. Explicitly, fix an integer sequence $\alpha(n) \to \infty$. As $n \to \infty$, for almost all $m \in [G_n, G_{n+1})$ the distribution of the number of summands in the generalized Zeckendorf decompositions of integers in the subintervals $[m, m + G_{\alpha(n)})$, appropriately normalized, converges to the standard normal. The proof follows by showing that, with probability tending to $1$, $m$ has at least one appropriately located large gap between indices in its decomposition. We then use a correspondence between this interval and $[0, G_{\alpha(n)})$ to obtain the result, since the summands are known to have Gaussian behavior in the latter interval. \end{abstract}

\maketitle

\tableofcontents

%\addcontentsline{toc}{section}{Table of Contents}

%%%%%%%%%%%%%%%%%%%%%%%%%%%%%%%%%%%%%%%%%%%%%%%%%%%%%%%%%%%%%%%%%%%%%%%%%%%%%%%%%%%%%%%%%%%%%%%%%%%%%%%%%%%%%%%%%%%%%%%%%%%%%%%%%%%%%%%%%%

%%%%%%%%%%%%%%%%%%%%%%%%%%%%%%%%%%%%%%%%%%%%%%%%%%%%%%%%%%%%%%%%%%%%%%%%%%%%%%%%%%%%%%%%%%%%%%%%%%%%%%%%%%%%%%%%%%%%%%%%%%%%%%%%%%%%%%%%%%

%%%%%%%%%%%%%%%%%%%%%%%%%%%%%%%%%%%%%%%%%%%%%%%%%%%%%%%%%%%%%%%%%%%%%%%%%%%%%%%%%%%%%%%%%%%%%%%%%%%%%%%%%%%%%%%%%%%%%%%%%%%%%%%%%%%%%%%%%%

%%%%%%%%%%%%%%%%%%%%%%%%%%%%%%%%%%%%%%%%%%%%%%%%%%%%%%%%%%%%%%%%%%%%%%%%%%%%%%%%%%%%%%%%%%%%%%%%%%%%%%%%%%%%%%%%%%%%%%%%%%%%%%%%%%%%%%%%%%

%%%%%%%%%%%%%%%%%%%%%%%%%%%%%%%%%%%%%%%%%%%%%%%%%%%%%%%%%%%%%%%%%%%%%%%%%%%%%%%%%%%%%%%%%%%%%%%%%%%%%%%%%%%%%%%%%%%%%%%%%%%%%%%%%%%%%%%%%%

\section{Introduction}

\subsection{Background}

Let $\{F_n\}$ denote the Fibonacci numbers, normalized so that $F_1 = 1$, $F_2 = 2$, and $F_{n+1} = F_n + F_{n-1}$. One of the more interesting, equivalent definitions of the Fibonacci numbers is that they are the unique sequence of positive integers such that every positive number has a unique legal decomposition as a sum of non-adjacent terms.\footnote{The requirement of uniqueness of decomposition forces us to start the sequence this way.} This equivalence is known as Zeckendorf's theorem \cite{Ze}. Once we know a decomposition exists, a natural question to ask is how the number of summands varies. The first result along these lines is due to Lekkerkerker \cite{Lek}, who proved that the average number of summands needed in the Zeckendorf decomposition of an integer $m \in [F_n, F_{n+1})$ is $\frac{n}{\varphi^2+1} + O(1)$, where $\varphi = \frac{1+\sqrt{5}}{2}$, the golden mean, is the largest root of the Fibonacci recurrence. More is true, and many authors have shown that the distribution of summands of $m \in [F_n, F_{n+1})$ converges to a Gaussian. These results have been extended to a variety of other sequences. There are several different methods of proof, from continued fractions to combinatorial perspectives to Markov processes; see \cite{B-AM,Day,DDKMV,DG,FGNPT,GT,GTNP,Ke,KKMW,LT,Len,MW1,MW2,Ste1,Ste2} for a sampling of results and methods along these lines, \cite{Al,CHMN1,CHMN2,CHMN3,DDKMMV,DDKMV} for generalizations to other types of representations, and \cite{BBGILMT,BILMT} for related questions on the distribution of gaps between summands.

The analysis in much of the previous work was carried out for $m \in [F_n, F_{n+1})$  (or, for more general sequences $\{G_n\}$, for $m \in [G_n, G_{n+1})$). The advantage of such a localization\footnote{As the sequence $\{F_n\}$ is exponentially growing,  it is easy to pass from $m$ in this interval to $m \in [0, F_n)$.} is that each $m$ has the same candidate set of summands and is of roughly the same size. The purpose of this work is to explore some of the above questions on a significantly smaller scale and determine when and how often we obtain Gaussian behavior. Note that we cannot expect such behavior to hold for all sub-intervals of $[F_n, F_{n+1})$, even if we require the size to grow with $n$. To see this, consider the interval \be [F_{2n} + F_{n} + F_{n-2} + \cdots + F_{\lfloor n^{1/4} \rfloor},\ F_{2n} + F_{n+1} + F_{\lfloor n^{1/4}\rfloor}). \ee The integers in the above interval that are less than $F_{2n} + F_{n+1}$ have on the order of $n/2$ summands, while those that are larger have at most on the order of $n^{1/4}$ summands. Thus the behavior cannot be Gaussian.\footnote{Though in this situation it would be interesting to investigate separately the behavior on both sides.}

In \cite{BDEMMTTW}, we proved Gaussian behavior for the number of summands in the Zeckendorf decomposition for almost all small subintervals of $[F_n, F_{n+1})$; in this work we generalize to other sequences. Henceforth $\{G_n\}$ will denote a positive linear recurrence sequence:

\begin{defi}\label{defn:goodrecurrencereldef}\label{def:goodrecurrence} A sequence $\{G_n\}_{n=1}^\infty$ of positive integers is a \textbf{Positive Linear Recurrence Sequence (PLRS)} if the following properties hold:

\ben
\item \emph{Recurrence relation:} There are non-negative integers $L, c_1, \dots, c_L$\label{c_i} such that \be G_{n+1} \ = \ c_1 G_n + \cdots + c_L G_{n+1-L},\ee with $L, c_1$ and $c_L$ positive.
\item \emph{Initial conditions:} $G_1 = 1$, and for $1 \le n < L$ we have
\be G_{n+1} \ =\
c_1 G_n + c_2 G_{n-1} + \cdots + c_n G_{1}+1.\ee
\een

A decomposition $\sum_{i=1}^{m} {a_i G_{m+1-i}}$\label{a_i} of a positive integer $N$ (and the sequence $\{a_i\}_{i=1}^{m}$) is \textbf{legal}\label{legal} if $a_1>0$, the other $a_i \ge 0$, and one of the following two conditions holds:

\begin{itemize}

\item Condition 1: %\label{legalcond1}
We have $m<L$ and $a_i=c_i$ for $1\le i\le m$.

\item Condition 2: There exists $s\in\{0,\dots, L\}$ such that
\begin{equation}\label{eq:legalcondition2}
a_1\ = \ c_1,\ a_2\ = \ c_2,\ \cdots,\ a_{s-1}\ = \ c_{s-1}\ {\rm{and}}\ a_s<c_s,
\end{equation}
$a_{s+1}, \dots, a_{s+\ell} \ = \  0$ for some $\ell \ge 0$,
and $\{b_i\}_{i=1}^{m-s-\ell}$ (with $b_i = a_{s+\ell+i}$) is legal.

\end{itemize}

If $\sum_{i=1}^{m} {a_i G_{m+1-i}}$ is a legal decomposition of $N$, we define the \textbf{number of summands}\label{summands} (of this decomposition of $N$) to be $a_1 + \cdots + a_m$.
\end{defi}

Informally, a legal decomposition is one where we cannot use the recurrence relation to replace a linear combination of summands with another summand, and the coefficient of each summand is appropriately bounded. For example, if $G_{n+1} = 2 G_n + 3 G_{n-1} + G_{n-2}$, then $G_5 + 2 G_4 + 3 G_3 + G_1$ is legal, while $G_5 + 2 G_4 + 3 G_3 + G_2$ is not (we can replace $2 G_4 + 3 G_3 + G_2$ with $G_5$), nor is $7G_5 + 2G_2$ (as the coefficient of $G_5$ is too large). Note the Fibonacci numbers correspond to the special case of $L=2$ and $c_1 = c_2 = 1$.

Earlier work (see, for example, \cite{MW1,MW2}) proved that if $\{G_n\}$ is a PLRS then we again have unique legal decompositions and Gaussian behavior for the number of summands from $m \in [G_n, G_{n+1})$.

\subsection{Main Result}\hspace*{\fill} \\

Fix an increasing positive integer valued function $\alpha(n)$ with \be\label{eq:defnalphan} \lim_{n\to\infty}{\alpha(n)} \ = \ \lim_{n\to\infty}\left(n-\alpha(n)\right)\ = \ \infty.\ee  Our main result, given in the following theorem, extends the Gaussian behavior of the number of summands in Zeckendorf decompositions to smaller intervals. Note that requiring $m$ to be in $[G_n, G_{n+1})$ is not a significant restriction because given any $m$, there is always an $n$ such that this holds.

\begin{thm}[Gaussianity on small intervals] \label{thm:mainthm}
Let $\{G_n\}$ be a positive linear recurrence sequence with recurrence
\begin{align}
G_{n+1}\ = \ c_1G_n+c_2G_{n-1}+\cdots+c_LG_{n+1-L},
\end{align}
where we additionally assume $c_1\geq c_2\geq \cdots \geq c_L\geq 1$. Choose $\alpha(n)$ satisfying \eqref{eq:defnalphan}. The distribution of the number of summands in the decompositions of integers in the interval $[m,m+G_{\alpha(n)})$ converges to a Gaussian distribution when appropriately normalized for almost all $m\in [G_n,G_{n+1})$.
\end{thm}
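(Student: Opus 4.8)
The plan is to reduce this small-interval statement to the known full-interval Gaussian result by exhibiting, for almost every $m$, a gap in the decomposition of $m$ that decouples a high-order part, which stays frozen as we run through $[m, m + G_{\alpha(n)})$, from a low-order part, which ranges over a genuine full interval and is therefore Gaussian. Throughout I would model a uniformly chosen $m \in [G_n, G_{n+1})$ by the coefficient string $(a_1, a_2, \dots)$ of its unique legal decomposition and write $S(x)$ for the number of summands of $x$. The first task is then purely a statement about how often long gaps occur in a random legal string, and the second is to convert the existence of such a gap into an additive decomposition of $S$.

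\textbf{Gap existence.} First I would quantify the frequency of gaps. Legal strings for a PLRS are describable by a finite-state mechanism governed by the recurrence, and the hypothesis $c_1 \ge c_2 \ge \cdots \ge c_L \ge 1$ provides enough regularity that the length of the run of zero coefficients beginning at any fixed index has an exponentially decaying tail, uniformly in the index. Since $\alpha(n) \to \infty$ and $n - \alpha(n) \to \infty$, there is a window of indices of length $w = w(n) \to \infty$ sitting just above $\alpha(n)$ and well below $n$; the probability that this entire window contains no run of $L$ consecutive zero coefficients is at most $(1-\delta)^{w/L} \to 0$ for some fixed $\delta > 0$. Hence, with probability tending to $1$, the decomposition of $m$ has a gap of length at least $L$ at some index $\gamma$ in a controlled window just above $\alpha(n)$.

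\textbf{Decoupling and correspondence.} On the event that such a gap exists, split $m = m_{\mathrm{top}} + m_{\mathrm{bot}}$, where $m_{\mathrm{top}}$ collects the summands at indices above $\gamma + L$ and $m_{\mathrm{bot}} < G_{\gamma+1}$ collects those at indices at most $\gamma$. For $k \in [0, G_{\alpha(n)})$ the sum $m+k$ only disturbs coefficients below the gap, and a gap of length $\ge L$ is exactly what is needed to ensure that no carry produced while renormalizing $m_{\mathrm{bot}} + k$ into a legal decomposition propagates past the gap into $m_{\mathrm{top}}$; this is where the recurrence depth $L$ enters. Thus $S(m+k) = S(m_{\mathrm{top}}) + S(m_{\mathrm{bot}} + k)$, with $S(m_{\mathrm{top}})$ a constant shift, so the law of $S$ on $[m, m + G_{\alpha(n)})$ is a translate of the law of $S$ on $[m_{\mathrm{bot}}, m_{\mathrm{bot}} + G_{\alpha(n)})$. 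I would then arrange the gap to lie as close to $\alpha(n)$ as the window permits, so that this residual interval sits at scale $\alpha(n)$, and compare its summand-count distribution to that on $[0, G_{\alpha(n)})$, where Gaussianity (after centering and scaling by the known mean and standard deviation, both of order $\alpha(n)$) is supplied by the established full-interval theory; convergence to the standard normal for almost all $m$ then follows from the continuity theorem for characteristic functions, once the exceptional set of $m$ is shown to have vanishing measure.

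\textbf{Main obstacle.} The hard part will be the comparison in the last step, and it is precisely the feature flagged in the introduction: because integers in a short interval do not share a common summand set, the correspondence with $[0, G_{\alpha(n)})$ is not an exact identity of summand-count multisets but an approximation, whose error must be shown negligible after normalization. Concretely, one must control the boundary effects caused by $m_{\mathrm{bot}} \ne 0$ (so that $[m_{\mathrm{bot}}, m_{\mathrm{bot}} + G_{\alpha(n)})$ is a shifted rather than anchored interval) and the bounded carry absorbed by the gap, while simultaneously keeping $\gamma$ close enough to $\alpha(n)$ that the argument does not merely relocate the same problem to a larger scale. Balancing the window length $w$ (needed large for the gap to exist with probability tending to $1$) against the requirement that $\gamma$ stay near $\alpha(n)$ (needed for a clean correspondence), and bounding the measure of the $m$ for which either fails, is the crux where the analysis is substantially more involved than in the full-interval setting.
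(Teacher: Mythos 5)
Your opening moves match the paper's: a run of zeros just above $\alpha(n)$ occurs with probability $1+o(1)$ (the paper proves this via an elementary recurrence in Appendix A, citing \cite{BILMT}, and needs only non-negative $c_i$'s there, so your appeal to a finite-state mechanism is morally fine though asserted rather than proved), and such a run freezes the high part, giving $S(m+k)=S(m_{\mathrm{top}})+S(m_{\mathrm{bot}}+k)$ exactly. The genuine gap is the step you yourself label the ``crux'' and then leave without an argument: your reduction terminates at the law of $S$ on the \emph{shifted} interval $[m_{\mathrm{bot}},m_{\mathrm{bot}}+G_{\alpha(n)})$, and comparing that law with the law on the \emph{anchored} interval $[0,G_{\alpha(n)})$ --- the only place Gaussianity is known --- is not a boundary correction; it is an instance of the very theorem being proved, with the scale $n$ replaced by $\gamma$. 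Positioning the gap as close to $\alpha(n)$ as possible cannot rescue this: however small $\gamma-\alpha(n)$ is, $m_{\mathrm{bot}}$ ranges over $[0,G_{\gamma+1})$ and can be as large as $G_{\alpha(n)}$ or larger, and the gap can never be pushed below $\alpha(n)$ because adding $k<G_{\alpha(n)}$ disturbs every index below $\alpha(n)$. Iterating your reduction therefore never anchors the interval at $0$; the proposal is circular at exactly the point where the theorem's content lies.

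What is missing is the paper's mechanism (Lemma \ref{error}): truncate $m$ at index $\alpha(n)$ --- \emph{not} at the gap --- to get $m_0<G_{\alpha(n)}$, and use the rotation $t(m+h)=m_0+h$ if $m_0+h<G_{\alpha(n)}$ and $t(m+h)=m_0+h-G_{\alpha(n)}$ otherwise, i.e.\ addition of $m_0$ modulo $G_{\alpha(n)}$. This is an exact bijection of $[m,m+G_{\alpha(n)})$ onto the anchored interval, so $s\circ t$ under the uniform measure has \emph{exactly} the known law on $[0,G_{\alpha(n)})$; no shifted-interval version of the full-interval theorem is ever needed. The gap, placed anywhere in a buffer $C_2$ of $q(n)$ indices above $\alpha(n)$, serves only to freeze $C_3$; the summands with indices in the buffer, at most $Kq(n)$ of them, constitute the entire discrepancy $s(x)-s_3(m)-s(t(x))$, and the condition $q(n)=o\bigl(\sqrt{\alpha(n)}\bigr)$ from \eqref{eq:defnqn} makes this negligible against the standard deviation, which is of order $\sqrt{\alpha(n)}$ --- not of order $\alpha(n)$ as you wrote, a misstatement that matters because it governs how large an error the normalization can absorb. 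Moreover, proving that the decomposition of $x$ below index $\alpha(n)$ coincides with that of $t(x)$ is precisely where the hypothesis $c_1\geq\cdots\geq c_L\geq 1$ is consumed, via the truncation-legality lemma and Corollary \ref{greedy}; your proposal invokes that hypothesis only vaguely for gap statistics (where no such assumption is needed), which is a further symptom that the decisive step is absent. Finally, because the correspondence is exact only up to the $O(q(n))$ buffer term, one still needs the paper's closing variance comparison ($\sigma_x(n)=\sigma_y(n)(1+o(1))$) and cumulative-distribution sandwich; this, too, has no counterpart in your outline.
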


In \S\ref{sec:preliminaries} we derive some useful properties of Zeckendorf decompositions, which we use in \S\ref{sec:proofofthmmain} to prove Theorem \ref{thm:mainthm}. The reason for our extra condition on the $c_i$'s surfaces in Lemma \ref{greedy}, where this constraint forces a truncated legal decomposition to remain legal. We conjecture that Theorem \ref{thm:mainthm} holds more generally.

%%%%%%%%%%%%%%%%%%%%%%%%%%%%%%%%%%%%%%%%%%%%%%%%%%%%%%%%%%%%%%%%%%%%%%%%%%%%%%%%%%%%%%%%%%%%%%%%%%%%%%%%%%%%%%%%%%%%%%%%%%%%%%%%%%%%%%%%%%%%%%%%

%%%%%%%%%%%%%%%%%%%%%%%%%%%%%%%%%%%%%%%%%%%%%%%%%%%%%%%%%%%%%%%%%%%%%%%%%%%%%%%%%%%%%%%%%%%%%%%%%%%%%%%%%%%%%%%%%%%%%%%%%%%%%%%%%%%%%%%%%%%%%%%%

%%%%%%%%%%%%%%%%%%%%%%%%%%%%%%%%%%%%%%%%%%%%%%%%%%%%%%%%%%%%%%%%%%%%%%%%%%%%%%%%%%%%%%%%%%%%%%%%%%%%%%%%%%%%%%%%%%%%%%%%%%%%%%%%%%%%%%%%%%%%%%%%

%%%%%%%%%%%%%%%%%%%%%%%%%%%%%%%%%%%%%%%%%%%%%%%%%%%%%%%%%%%%%%%%%%%%%%%%%%%%%%%%%%%%%%%%%%%%%%%%%%%%%%%%%%%%%%%%%%%%%%%%%%%%%%%%%%%%%%%%%%%%%%%%

\section{Preliminaries}\label{sec:preliminaries}

In order to prove Theorem \ref{thm:mainthm}, we establish a correspondence between the decompositions of integers in the interval $[m,m+G_{\alpha(n)})$ and those in $[0,G_{\alpha(n)})$.  Throughout this paper, when we write an interval $[a,b]$, we mean the integers in this interval.  We first introduce some notation.  Fix a non-decreasing positive function $q(n)$, taking on even values, such that \be\label{eq:defnqn} q(n) \ < \ n-\alpha(n), \ \ \ q(n) \ = \ o\left(\sqrt{\alpha(n)}\right), \ \ \ \lim_{n\to\infty} q(n) \ = \ \infty;  \ee the reason for the second condition is to allow us to appeal to known convergence results for a related system, as $q(n)$ will be significantly less than the standard deviation in that setting.

For $m\in [G_n,G_{n+1})$ with decomposition
\begin{align}
m & \ = \ \sum_{j=1}^n{a_jG_j},
\end{align}
define
\begin{align}
& C_1(m) \ := \ (a_1,a_2,...,a_{\alpha(n)}), \nonumber\\
& C_2(m) \ := \ (a_{\alpha(n)+1},...,a_{\alpha(n)+q(n)}),\nonumber\\
& C_3(m) \ := \ (a_{\alpha(n)+q(n)+1},...,a_n).
\end{align}
Note that each $a_i \in \{0,1\}$ for all $1 \le i \le n$. Let $s(m)$ be the number of summands in the decomposition of $m$. That is, let
\begin{align}
s(m) \ := \ \sum_{j=1}^n{a_j}.
\end{align}
Similarly, let $s_1(m), \ s_2(m)$, and $s_3(m)$ be the number of summands contributed by $C_1(m), \ C_2(m)$, and $C_3(m)$ respectively.

\begin{lem}
Let $x\in[m,m+G_{\alpha(n)})$. If there are at least $3L$ consecutive 0's in $C_2(m)$, then $C_3(x)$ is constant, and hence $s_3(x)$ is constant as well.
\end{lem}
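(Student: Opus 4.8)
The plan is to show that, under the hypothesis of a long run of zeros in $C_2(m)$, the high-order digits of any $x$ in the window $[m,m+G_{\alpha(n)})$ are decoupled from the carries that adding a number less than $G_{\alpha(n)}$ can generate. The key quantitative fact is that $G_{\alpha(n)} - 1$ is itself a legal (Zeckendorf) number whose decomposition uses only the summands $G_1,\dots,G_{\alpha(n)}$, so adding any integer in $[0,G_{\alpha(n)})$ to $m$ can only directly perturb the low block $C_1$. The subtlety is that such an addition may trigger a carry that propagates upward through the recurrence; I would argue that a block of at least $3L$ consecutive zeros inside $C_2(m)$ acts as a firewall that absorbs any such carry before it can reach $C_3$.

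First I would make the carry-propagation picture precise. Adding a value $t < G_{\alpha(n)}$ to $m$ and re-legalizing the result proceeds greedily from the low-order end; the only way the digits above index $\alpha(n)$ can change is if the addition forces a cascade of applications of the recurrence relation $G_{n+1} = c_1 G_n + \cdots + c_L G_{n+1-L}$. Each such application replaces a configuration supported on at most $L$ consecutive indices, so a single carry moves the ``disturbance frontier'' up by a bounded number of indices at a time. I would track the frontier and show that it cannot jump across the zero block: if the legalization process reaches the bottom of a run of consecutive zeros, the recurrence has no nonzero coefficients immediately above it to combine with, and (invoking the monotonicity $c_1 \ge c_2 \ge \cdots \ge c_L \ge 1$ and the legality conditions of Definition \ref{def:goodrecurrence}) the carry is extinguished within at most $L$ positions. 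A run of length $3L$ gives a comfortable margin — enough room for the carry to enter, be fully resolved, and leave everything at and above index $\alpha(n)+q(n)+1$ untouched.

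Concretely, I would fix $x \in [m, m+G_{\alpha(n)})$ and compare the legal decomposition of $x$ with that of $m$. Write $x = m + t$ with $0 \le t < G_{\alpha(n)}$. Since $t$ and all of $C_1(m)$ are supported on indices $\le \alpha(n)$, the ``raw'' sum before legalization has digits agreeing with $m$ from index $\alpha(n)+1$ upward. Legalizing introduces carries only from below. I would show by induction on the legalization steps that once the process encounters the guaranteed block of $3L$ consecutive zeros in $C_2(m)$, every index above that block — in particular all of $C_3(m)$, which lives at indices $\ge \alpha(n)+q(n)+1$ beyond the block — retains exactly the digits it had in $m$. Hence $C_3(x) = C_3(m)$ for every such $x$, so $C_3(x)$ is constant across the window, and $s_3(x) = \sum$ of those (fixed) digits is likewise constant.

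The main obstacle I anticipate is the careful bookkeeping of the re-legalization dynamics: the recurrence-based ``carry'' is genuinely more intricate than ordinary base-$b$ carrying, because a single overflow can be resolved by rewriting up to $L$ digits at once, and successive rewrites can chain. I expect the monotonicity hypothesis $c_1 \ge \cdots \ge c_L$ to be exactly what guarantees that a truncated or locally-modified legal string stays legal (this is precisely the role flagged for Lemma \ref{greedy} in the text), thereby preventing a carry from re-igniting after it has passed a zero gap. Making the constant $3L$ rigorous — as opposed to merely ``some constant multiple of $L$'' — will require verifying that the worst-case cascade consumes strictly fewer than $3L$ zeros, which I would handle by bounding the reach of each elementary legalization move and summing over the at most one cascade that can occur.
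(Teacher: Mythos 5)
Your ``firewall'' intuition is the right one, and it is the same intuition that drives the paper's proof; the problem is that your write-up defers every load-bearing step rather than proving it. Concretely: (i) you never define the ``legalization''/carry process for a general PLRS --- the paper supplies no such algorithm, and constructing one and proving that it terminates in the unique legal decomposition is itself a nontrivial task; (ii) the claims that each elementary move advances the disturbance frontier by a bounded number of indices, that a carry entering the zero run is extinguished within $L$ positions, and that the worst-case cascade consumes strictly fewer than $3L$ zeros are precisely the content of the lemma, and in your proposal they are asserted (or flagged as ``to be verified''), not established; (iii) you lean on the monotonicity hypothesis $c_1 \ge c_2 \ge \cdots \ge c_L$ and on Corollary \ref{greedy} to stop carries from ``re-igniting,'' but that is a misattribution: the paper's proof of this lemma uses no monotonicity at all (only that $c_1, c_2 \ge 1$), and Corollary \ref{greedy} is needed later, in the proof of Lemma \ref{error}, not here. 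So as written this is a restatement of the statement plus a plan, with the hard analysis missing.

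The missing idea that makes all carry bookkeeping unnecessary is a \emph{static} size bound combined with \emph{uniqueness} of legal decompositions. Locate the zero run, say $a_{k-3L+1} = \cdots = a_k = 0$ with $k < \alpha(n) + q(n)$, and let $m'$ be the truncation of $m$'s decomposition at index $k-3L$, so that $m' < G_{k-3L+1}$. Since $G_{\alpha(n)} \le G_{k-3L}$, for every $h < G_{\alpha(n)}$ one has $m' + h < G_{k-3L+1} + G_{k-3L} \le c_1 G_{k-3L+1} + c_2 G_{k-3L} + \cdots + c_L G_{k-2L+2} = G_{k-3L+2}$, so the legal decomposition of $m' + h$ is supported on indices at most $k-3L+2$. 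Concatenating that decomposition with the decomposition of $m - m'$ (supported on indices above $k$) leaves a gap of at least $3L-2 \ge L$ zeros, so the concatenation is a legal decomposition; by uniqueness it \emph{is} the decomposition of $m + h$. Hence $C_3(m+h) = C_3(m-m') = C_3(m)$ for every $h$, with no analysis of cascades whatsoever. Note that any rigorous version of your dynamic argument would ultimately have to prove an inequality of exactly this kind to bound the reach of a cascade, so the truncation-plus-uniqueness route is not merely cleaner --- it is the step your approach cannot avoid and currently lacks.
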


\begin{proof}
Assume there are at least $3L$ consecutive 0's in $C_2(m)$. Then for some $k\in [\alpha(n)+3L,\alpha(n)+q(n))$, we have $a_{k-3L+1}=a_{k-3L+2}=\cdots=a_k=0$.  Let $m'$ denote the integer obtained by truncating the decomposition of $m$ at $a_{k-3L}G_{k-3L}$.  Then $m'<G_{k-3L+1}$. Since $G_{\alpha(n)}\leq G_{k-3L}$, it follows that for any $h<G_{\alpha(n)}$ we have
\begin{align}
m'+h  \ & < \ G_{k-3L+1}+G_{k-3L}\ \nonumber\\
& \leq \ c_1G_{k-3L+1}+c_2G_{k-3L}+\cdots+c_LG_{k-2L+2} \ = \ G_{k-3L+2},
\end{align}
and thus the decomposition of $m'+h$ has largest summand no greater than $G_{k-3L+2}$.  Then since $3L-2\geq L$, the Zeckendorf decomposition of $m+h$ is obtained simply by concatenating the decompositions for $m-m'$ and $m'+h$.  Hence $C_3(m+h)=C_3(m-m')=C_3(m)$.
\end{proof}

With this lemma, we see that the distribution of the number of summands involved in the decomposition of $x\in [m,m+G_{\alpha(n)})$ depends (up to a shift) only on what happens in $C_1(x)$ and $C_2(x)$, \textbf{\emph{provided}} that there is a gap between summands of length at least $3L$ somewhere in $C_2(m)$.  In light of this stipulation, we will show the following items in order to prove our main theorem.

\begin{itemize}

\item With high probability, $m$ is of the desired form (i.e., there is a gap between summands of length at least $3L$ in $C_2(m)$).

\item When $m$ is of the desired form, the distribution of the number of summands involved in $C_1(x)$ for $x\in [m,m+G_{\alpha(n)})$ converges to Gaussian when appropriately normalized.

\item The summands involved in $C_2(x)$ produce a negligible error term (i.e., there are significantly fewer summands from $C_2(x)$ than there are from $C_1(m)$).

\end{itemize}

We address the first point with the following lemma.

\begin{lem} As $n\to\infty$, with probability $1+o(1)$ there are at least $3L$ consecutive 0's in $C_2(m)$ if $m$ is chosen uniformly at random from the integers in $[G_n,G_{n+1})$.
\end{lem}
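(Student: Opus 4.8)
The plan is to reduce the statement to a counting bound and then exploit that the window $C_2(m)$ has length $q(n)\to\infty$ by partitioning it into a growing number of blocks, in each of which an all-zero pattern occurs with probability bounded below. Since $m$ is uniform on $[G_n,G_{n+1})$ and legal decompositions are unique, the induced law on the coefficient string $(a_1,\dots,a_n)$ is the uniform measure on legal strings of the relevant type; conditioning on a subset of coordinates then yields the uniform measure on the legal completions of the remaining coordinates. This is the only probabilistic input we need. Concretely, I would partition $C_2(m)$ into $b:=\lfloor q(n)/(3L)\rfloor$ consecutive disjoint blocks $B_1,\dots,B_b$, each of length $3L$ and lying entirely in positions $\alpha(n)+1,\dots,\alpha(n)+q(n)$, and let $E_i$ be the event that every coordinate in $B_i$ is $0$. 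If some $E_i$ occurs then $C_2(m)$ contains at least $3L$ consecutive zeros, so it suffices to prove $\Pr[\overline{E_1}\cap\cdots\cap\overline{E_b}]=o(1)$.

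The heart of the argument is a uniform per-block estimate: there is a constant $c=c(L,c_1)>0$ such that, for \emph{every} legal assignment of the coordinates outside $B_i$, the conditional probability of $E_i$ is at least $c$. To see this, note first that the number of legal fillings of the $3L$ coordinates of $B_i$ compatible with any fixed legal exterior is at most a constant $C_L$ depending only on the recurrence, since each coordinate ranges over a bounded set. Second, the all-zero filling is always one of these legal completions: a run of $3L\ge L$ zeros decouples the part of the decomposition above $B_i$ from the part below, exactly as in the preceding lemma, so replacing the contents of $B_i$ by zeros and concatenating the legal pieces above and below again produces a legal decomposition of an integer in $[G_n,G_{n+1})$ (the top coordinates, which govern membership in this interval, are untouched). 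Keeping the truncated high part legal in this step is precisely what the hypothesis $c_1\ge c_2\ge\cdots\ge c_L\ge 1$ guarantees, via Lemma~\ref{greedy}. Hence $\Pr[E_i\mid \text{exterior}]\ge 1/C_L=:c$.

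Because this lower bound is uniform over all exterior configurations, it persists under conditioning on any event measurable with respect to the coordinates outside $B_i$; in particular $\Pr[\overline{E_i}\mid \overline{E_1},\dots,\overline{E_{i-1}}]\le 1-c$ for each $i$. Chaining these inequalities gives $\Pr[\overline{E_1}\cap\cdots\cap\overline{E_b}]\le(1-c)^b$. Since $q(n)\to\infty$ we have $b\to\infty$, so $(1-c)^b\to 0$, and therefore with probability $1+o(1)$ there are at least $3L$ consecutive zeros in $C_2(m)$.

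The step I expect to be the main obstacle is the uniform per-block bound, specifically the claim that zeroing out a block preserves legality for \emph{every} admissible exterior rather than merely on average. This is exactly where the monotonicity of the $c_i$ enters: it lets the truncation/greedy lemma certify that the high part of the decomposition stays legal after we cut at the block, so that the decoupling-by-zero-run principle of the previous lemma applies verbatim. The only remaining delicacy is the interaction of legality across the two block boundaries (coordinates within distance $L$), but since a full block of $3L\ge L$ zeros already exceeds the interaction range $L$, this boundary coupling cannot obstruct the all-zero completion, and the constant $C_L$ absorbs all such boundary effects.
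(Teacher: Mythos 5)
Your proof is correct, but it takes a genuinely different route from the paper's. The paper disposes of this lemma by counting: it appeals to the gap results of \cite{BILMT} (longest gap of order $\log n$), and, to stay elementary, proves in Appendix A that the number $H_n$ of integers in $[0,G_{n+1})$ whose decomposition has no gap of length at least $Z$ satisfies a linear recurrence whose dominant eigenvalue is strictly smaller than that of $\{G_n\}$, so $H_n/G_n\to 0$ exponentially; a remark then localizes this to the window $C_2(m)$. You instead work inside $C_2(m)$ from the start: partition it into $b=\lfloor q(n)/(3L)\rfloor$ disjoint blocks, prove a conditional-probability lower bound $\Pr[E_i\mid\text{exterior}]\ge c>0$ uniform over all legal exteriors (using uniqueness of decompositions to identify the conditional law as uniform on legal completions, the bound $a_j\le c_1$ to cap the number of completions, and the fact that the all-zero completion is always legal), and chain complements to get $(1-c)^b\to 0$. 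Your approach buys a self-contained, quantitative bound localized exactly where it is needed, with no Binet-type asymptotics. What it costs is generality: your key step --- that zeroing out a block preserves legality for \emph{every} legal exterior --- genuinely requires $c_1\ge c_2\ge\cdots\ge c_L\ge 1$. For instance, with $G_{n+1}=G_n+3G_{n-1}$ the string $(1,2,1,2)$ (read from the largest index down) is legal, but zeroing its third coordinate gives $(1,2,0,2)$, which is illegal, since a block cannot begin with the coefficient $2>c_1$; there the conditional probability of a zero at that site is $0$, not bounded below. The paper is deliberate about proving this lemma \emph{without} monotonicity --- it states explicitly that the condition on the $c_i$'s is not used until the following lemma, and the appendix argument needs only $c_i\ge 1$ --- whereas your proof imports monotonicity here. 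Since Theorem \ref{thm:mainthm} assumes it anyway, this does not affect the application, but it narrows the scope of the lemma itself. One small bookkeeping correction: the piece whose legality needs the truncation lemma is the part of the decomposition \emph{below} the zeroed block (a suffix, read from the top, which may begin mid-block); the part above needs only the observation that a partial block $(c_1,\dots,c_j)$ may legally terminate in a $0$ because $c_{j+1}\ge 1$.
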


%\begin{proof} In \cite{BILMT}, Bower, Insoft, Li, Miller and Tosteson analyze the largest gap between summands in legal decompositions, assuming only that the $c_i$'s are non-negative (a less restrictive condition than we have). They prove that for $m \in [G_n, G_{n+1})$ as $n\to\infty$ with probability $1+o(1)$ there is a gap of length $C\log n$ for some constant $C>0$, which trivially implies our lemma.\end{proof}

In \cite{BILMT}, Bower, Insoft, Li, Miller and Tosteson analyze the largest gap between summands in legal decompositions, assuming only that the $c_i$'s are non-negative (a less restrictive condition than we have). They prove that for $m \in [G_n, G_{n+1})$ as $n\to\infty$ with probability $1+o(1)$ there is a gap of length $C\log n$ for some constant $C>0$. This almost proves what we need, the only difficulty is we need the gap to be in $C_2(m)$. Instead of modifying their technical approach, in Appendix \ref{sec:appproofmodgaps} we derive a difference equation which proves as $n\to\infty$ with probability 1 we have a gap of any fixed finite size; while we prove that such a gap exists somewhere, the argument there is trivially modified to apply to just $C_2(m)$ (as if we have no summands in that region then the result is trivially true!), and completes the proof of the above lemma.

Until this point we have not used the condition that $c_1\geq c_2\geq\cdots\geq c_L$.  The following lemma illustrates the necessity of this condition.

\begin{lem}
If $c_1\geq c_2\geq \cdots\geq c_L$, then
\begin{align}
\sum_{j=1}^n{a_jG_{n+1-j}} \ \text{{\rm is\ legal}} \ \Rightarrow \ \sum_{j=2}^n{a_jG_{n+1-j}} \ \text{{\rm is\ legal.}}
\end{align}
\end{lem}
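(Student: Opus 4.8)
The plan is to induct on the length $n$, peeling off the leading summand $a_1G_n$ and re-reading the remaining string $(a_2,\dots,a_n)$ as a decomposition whose largest term is $G_{n-1}$. The guiding principle is that lowering every index by one replaces the bound $c_i$ governing the former coefficient $a_i$ by the bound $c_{i-1}$, and the hypothesis $c_1\ge c_2\ge\cdots\ge c_L$ guarantees $c_{i-1}\ge c_i$, so every new bound is at least as large as the old one. Consequently no legal coefficient can become illegally large after the shift; the only effect is that a position which previously matched its bound exactly may now fall strictly below it, which can only move the break point in Condition 2 earlier. I would run the argument on the strengthened inductive statement that \emph{every} suffix $(a_k,\dots,a_n)$ of a legal decomposition is legal, with $n=1$ as the trivial base case.

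Writing the legality of $\sum_{j=1}^n a_jG_{n+1-j}$ in the form of Condition 2, fix $s$ with $a_1=c_1,\dots,a_{s-1}=c_{s-1}$, $a_s<c_s$, a zero run $a_{s+1}=\cdots=a_{s+\ell}=0$, and a legal tail $T=(a_{s+\ell+1},\dots,a_n)$ of shorter length. If $s=1$, then $a_1<c_1$, and deleting $a_1$ leaves $\ell$ leading zeros followed by $T$; discarding the leading zeros returns $T$, which is legal, settling this case immediately.

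The substantive case is $s\ge 2$, where $a_1=c_1$ and the new leading coefficient is $a_2=c_2\le c_1$. I would verify Condition 2 for $(a_2,\dots,a_n)$ directly by reading the shifted string against $c_1,c_2,\dots$. Each former matched entry $a_{i+1}=c_{i+1}$ now sits under the larger bound $c_i$, so matching persists exactly while the relevant $c_j$ coincide and otherwise breaks strictly; in particular the shifted image of position $s$ satisfies $a_s<c_s\le c_{s-1}$, forcing a genuine strict drop at some position $s'\le s-1\le L-1$ with exact matching before it. This supplies the required prefix $a'_1=c_1,\dots,a'_{s'-1}=c_{s'-1}$, $a'_{s'}<c_{s'}$ of Condition 2, and it then remains only to show that the portion of the string beyond $s'$ is legal.

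The main obstacle is this final step, because $s'$ may land strictly inside the original matched block, so the leftover is a suffix of the original decomposition beginning before both the zero run and $T$. Here I would invoke the strengthened hypothesis through a finite secondary induction on the starting index: a suffix that begins at or past the zero run is a suffix of the shorter legal string $T$ and is legal by the outer inductive hypothesis, whereas a suffix beginning inside the matched block is, by the same break-point analysis, reduced to a suffix with a strictly later start, a process that terminates at the zero run. I expect this nested bookkeeping to be the delicate point. It is also precisely where $c_1\ge\cdots\ge c_L$ is indispensable: were some $c_{i+1}>c_i$, the shifted coefficient $a_2=c_2$ could exceed $c_1$ and render the truncated string illegal outright, and one can exhibit such a recurrence for which the conclusion genuinely fails.
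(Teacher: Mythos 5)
Your proof is correct and takes essentially the same approach as the paper: the paper's terse observation that $a_2\le c_1,\ a_3\le c_2,\ \dots$ with a strict drop at some $s'\le s$, followed by the phrase ``we can repeat this process, noting that a block ends at $a_s$,'' is exactly your shift-and-compare block analysis, with the repetition made rigorous by your nested induction on the suffix starting index. The additional structure you supply (the strengthened claim that every suffix is legal, plus the outer induction on length to handle the legal tail $T$) is precisely what the paper leaves implicit.
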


\begin{proof}
If the first sum is legal, then for some $s$ we have $a_1=c_1, \ a_2=c_2, \ \cdots \ ,a_s < c_s$.  Then we necessarily have $a_2\leq c_1, \ a_3\leq c_2, \ \cdots \ a_s'<c_{s'-1}$ for some $s'\leq s$.  We can repeat this process, noting that a block ends at $a_s$.  Therefore, the decomposition is legal.
\end{proof}

\begin{cor}\label{greedy}
If $c_1\geq c_2\geq \cdots \geq c_L$, then legal decompositions are chosen by the greedy algorithm.
\end{cor}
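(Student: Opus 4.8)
The plan is to show that the greedy algorithm—repeatedly subtracting the largest $G_j$ that fits, with the correct multiplicity—produces a legal decomposition, which by uniqueness must be \emph{the} legal decomposition. The key engine is the preceding lemma, which guarantees that legality is preserved under stripping off the leading term. First I would set up the greedy algorithm precisely for a PLRS with $c_1 \geq c_2 \geq \cdots \geq c_L$: given $N$ with $G_k \leq N < G_{k+1}$, choose the largest coefficient $a_1 \in \{0, 1, \dots, c_1\}$ such that $a_1 G_k \leq N$, subtract, and recurse on $N - a_1 G_k$ against the next index down. The goal is to verify that the coefficient string $(a_1, a_2, \dots)$ so produced satisfies Condition 1 or Condition 2 of Definition~\ref{def:goodrecurrence}.

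The heart of the argument is an induction on the length of the remaining decomposition. At each greedy step we produce a leading block of coefficients that either matches $(c_1, c_2, \dots, c_{s-1})$ exactly and then drops strictly below $c_s$ at position $s$, or terminates early; in either case the residual $N'$ after removing the current summand is strictly smaller than the relevant $G$-value, so the greedy choice at the next index is forced and the tail is again a greedy decomposition of a smaller integer. Here is precisely where the preceding lemma does the work: to conclude that the tail $\sum_{j\geq 2} a_j G_{n+1-j}$ is legal, I invoke the implication that legality of the full string forces legality of the string with its first coefficient removed, but run \emph{in reverse}—I build legality of the whole from legality of the greedily-constructed tail, using the monotonicity $c_1 \geq \cdots \geq c_L$ to guarantee that the block structure of Condition 2 closes off correctly after each drop below some $c_s$. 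Because greedy always selects the maximal admissible coefficient, the string it emits never violates the coefficient bounds, and the lemma certifies that the block boundaries align with the definition of legality.

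The main obstacle I anticipate is the bookkeeping at the block boundaries: I must check that when the greedy coefficient first falls short of $c_s$ at some position $s$, the subsequent coefficients genuinely reset to a fresh legal sub-block rather than continuing to be constrained by the original recurrence pattern. This is exactly the subtlety the monotonicity hypothesis is designed to handle—without $c_1 \geq c_2 \geq \cdots \geq c_L$ one could have a greedy residual that is too large to admit the truncation, so that dropping a leading term breaks legality and the greedy output fails to match the legal form. The preceding lemma abstracts away precisely this difficulty by showing that under the monotonicity condition truncation preserves legality, so my task reduces to confirming that greedy never overshoots (immediate from the maximality of each choice) and never undershoots in a way that would leave a representable remainder with a larger admissible leading term (immediate from the stopping rule $N < G_{k+1}$).

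Finally, I would close the argument by appealing to uniqueness of legal decompositions for a PLRS, already cited from \cite{MW1,MW2}: having shown the greedy output is legal, and knowing there is exactly one legal decomposition of each $N$, the greedy output must coincide with it. Thus legal decompositions are precisely those chosen by the greedy algorithm, which is the assertion of the corollary.
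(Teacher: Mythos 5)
Your overall strategy---show that the greedy output is legal, then invoke uniqueness of legal decompositions---is a viable route, and it runs in the opposite direction from the paper's proof. The paper argues by contradiction: it takes the legal decomposition, supposes it fails to be greedy at some index $k$, so that $\sum_{j=k+1}^n a_j G_{n+1-j} \geq G_{n+1-k}$, applies the truncation lemma iteratively \emph{in its stated, forward direction} to conclude that this tail is itself legal, and then contradicts the known fact that a legal decomposition of an integer at least $G_{n+1-k}$ must have largest term at least $G_{n+1-k}$. The genuine gap in your version is the central step: you certify legality of the greedy string by invoking the truncation lemma ``run in reverse,'' passing from legality of the tail to legality of the whole. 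The lemma does not reverse. Already in the Fibonacci case ($L=2$, $c_1=c_2=1$) the string $(1,1)$, i.e.\ $G_n + G_{n-1}$, has legal tail $(1)$ but is itself illegal; prepending a coefficient to a legal string preserves legality only when the block structure of Condition 2 is satisfied, which is precisely what you are trying to prove, not something the lemma hands you. As written, the argument is circular at exactly the point where the work must happen.

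What your strategy actually requires is a direct induction that does not use the truncation lemma at all. Two facts need proof: (i) under $c_1 \geq c_2 \geq \cdots \geq c_L$, the greedy coefficient inside a block never exceeds the corresponding $c_i$; concretely, after taking $c_1,\dots,c_{s-1}$ the remainder is less than $c_s G_{k+1-s} + c_{s+1}G_{k-s} + \cdots + c_L G_{k+1-L}$, and monotonicity gives $c_{s+1}G_{k-s} + \cdots + c_L G_{k+1-L} \leq G_{k+1-s}$, whence the greedy choice at position $s$ is at most $c_s$. This bound is \emph{not} ``immediate from the maximality of each choice,'' as you assert: maximality only prevents overshooting $N$, and without monotonicity greedy really can exceed $c_1$ (take $G_{n+1} = G_n + 3G_{n-1}$, so $G_1=1$, $G_2=2$, and $N=4$: greedy takes $2G_2$, which is illegal, while the legal decomposition is $G_2 + 2G_1$, which is not greedy---this also shows the corollary itself fails without the hypothesis). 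Building the cap $a_1 \leq c_1$ into the definition of the algorithm, as you do, does not evade this: you would then owe a proof that the capped algorithm coincides with true greedy, which is the same inequality. (ii) When greedy first takes $a_s < c_s$, maximality forces the remainder below $G_{k+1-s}$, so the continuation is the greedy decomposition of a strictly smaller integer, legal by induction, and the full string satisfies Condition 2 with this $s$ (zeros, then a fresh legal block). With (i) and (ii) supplied, your appeal to uniqueness correctly finishes the proof; without them, the proposal has not established that the greedy output is legal.
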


\begin{proof}
Suppose not, so that $\sum_{j=1}^n{a_jG_{n+1-j}}$ is a legal decomposition with $\sum_{j=k+1}^n{a_jG_{n+1-j}}\geq G_{n+1-k}$.  By an iterated application of the previous lemma, the above sum is a legal decomposition, and it is known that legal decompositions of integers in $[G_n,G_{n+1})$ all have largest term $G_n$, which gives a contradiction.
\end{proof}

Assuming $m$ is of the desired form (meaning there is a gap of length at least $3L$ in its decomposition), we now consider the distribution of $s(x)$ for $x\in [m,m+G_{\alpha(n)})$.

\begin{lem}\label{error}
If $m$ has at least 3L consecutive 0's in $C_2(m)$, then for all $x\in [m,m+G_{\alpha(n)})$, we have
\begin{align}
0\ \leq\ s(x)-s_3(m)-s(t(x)) \ < \ Kq(n),
\end{align}
where $K:=\max_j{c_j}$, and $t(x)$ denotes the bijection
\begin{align}
t:[m,m+G_{\alpha(n)})\to [0,G_{\alpha(n)})
\end{align}
given by
\begin{align}
\twocase{t(m+h)\ := \ }{m_0+h,}{if $m_0+h<G_{\alpha(n)},$}{m_0+h-
G_{\alpha(n)},}{if $m_0+h\geq G_{\alpha(n)}$,}
\end{align}
where $m_0$ is the sum of the terms in the decomposition of $m$ truncated at $a_{\alpha(n)-1}G_{\alpha(n)-1}$.
\end{lem}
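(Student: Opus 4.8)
The plan is to exploit the gap lemma (the first lemma of this section): given the $3L$-gap in $C_2(m)$, the legal decomposition of $x=m+h$ factors as a fixed top part concatenated above the decomposition of a truncated bottom part. Writing the gap at positions $k-3L+1,\dots,k$ with $k\in[\alpha(n)+3L,\alpha(n)+q(n))$ and $m':=\sum_{j=1}^{k-3L}a_jG_j$, that lemma shows the decomposition of $m+h$ is the decomposition of $m-m'$ (supported on positions $\ge k+1$) stacked above the decomposition of $m'+h$ (supported on positions $\le k-3L+1$). Since the summands of $m-m'$ above position $\alpha(n)+q(n)$ are exactly $C_3(m)$, I would record
\begin{align}
s(x)-s_3(m)\ =\ B+s(m'+h),
\end{align}
where $B$ counts the summands of $m$ in positions $k+1,\dots,\alpha(n)+q(n)$, so $0\le B<q(n)$. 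It then remains to compare $s(m'+h)$ with $s(t(x))$.

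For this comparison I would write $m'+h=(m_0+h)+D$, where $D:=m'-m_0=\sum_{j=\alpha(n)}^{k-3L}a_jG_j$ is supported on positions in $[\alpha(n),\alpha(n)+q(n)]$, and then follow the dichotomy built into $t$. When $m_0+h<G_{\alpha(n)}$ we have $t(x)=m_0+h$, supported on positions $<\alpha(n)$, with complementary block $E:=D$ supported on positions $\ge\alpha(n)$; when $m_0+h\ge G_{\alpha(n)}$ we have $t(x)=m_0+h-G_{\alpha(n)}$ and complementary block $E:=G_{\alpha(n)}+D$, again supported on positions $\ge\alpha(n)$. In either case $m'+h$ is the sum of $t(x)$ (positions $<\alpha(n)$) and a nonnegative block $E$ supported on positions $\ge\alpha(n)$. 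The crux is that the legal decomposition of $m'+h$ is obtained by concatenating the legal decomposition of $t(x)$ below the legalized block $E$, with no carry crossing downward past position $\alpha(n)$; this is the kind of concatenation justified by the gap lemma and by Corollary \ref{greedy} together with the truncation lemma preceding it, and it is guaranteed by the $3L$-gap, which absorbs any upward carry before it escapes the $C_2$-window. Granting this, $s(m'+h)=s(t(x))+s(E)$, so every summand of $t(x)$ survives, $s(m'+h)\ge s(t(x))$, and the lower bound follows as $s(x)-s_3(m)-s(t(x))=B+s(E)\ge0$.

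For the upper bound I would note that the block $E$ and the $B$ summands are supported on disjoint positions, all lying in the window $[\alpha(n),\alpha(n)+q(n)]$, and that every coefficient in a legal decomposition is at most $K=\max_j c_j$ (this is immediate from Conditions 1 and 2 in Definition \ref{def:goodrecurrence}); hence $B+s(E)<Kq(n)$. The main obstacle is the overflow case $m_0+h\ge G_{\alpha(n)}$: there one must show that adding $h$ pushes the bottom past $G_{\alpha(n)}$ in a manner whose carry is fully absorbed inside the $C_2$-window, so that the decomposition still splits cleanly at position $\alpha(n)$ and the summands of $t(x)$ are left untouched. Verifying this carry-absorption rigorously — and thereby the exact split $s(m'+h)=s(t(x))+s(E)$ — is the delicate step, and it is precisely where the hypothesis $c_1\ge\cdots\ge c_L$ is used, via Corollary \ref{greedy}, to keep the truncated and reassembled pieces legal.
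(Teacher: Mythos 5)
Your proposal follows the paper's skeleton: split the decomposition of $x=m+h$ at the $3L$-gap via the first lemma, identify the part of the decomposition below index $\alpha(n)$ with the decomposition of $t(x)$, and bound what remains by $K$ summands per index in the $C_2$-window. The step you isolate as ``the delicate step'' --- the exact split $s(m'+h)=s(t(x))+s(E)$ --- is indeed the entire content of the lemma (the paper dispatches it with a one-line appeal to Corollary \ref{greedy}). But you never prove it: your argument is literally ``Granting this,'' plus the remark that verifying the carry-absorption is delicate. Deferring the only nontrivial step is already a genuine gap, since both your lower bound and your upper bound are derived from that split.

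Worse, the split is false as stated, and not only in the overflow case you flag --- it fails even in the case you treat as unproblematic. Take Fibonacci numbers ($L=2$, $c_1=c_2=1$, $K=1$, with $F_1=1, F_2=2, F_3=3, F_4=5, F_5=8, F_6=13$), $\alpha(n)=5$, and $m$ whose low-order part is exactly $F_5$, so $D=F_5$ and $m_0=0$, with the required $3L$ zeros in $C_2(m)$. Choose $h=F_4=5<G_{\alpha(n)}=8$. This is your first case ($m_0+h=5<8$), so $t(x)=F_4$ and $E=D=F_5$; but $m'+h=F_5+F_4=F_6$ has one summand, not $s(t(x))+s(E)=2$, and the summand $F_4$ of $t(x)$ does not ``survive'' --- it merges across index $\alpha(n)$. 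The concatenation of two legal blocks need not be legal, and that is precisely what your clean-split claim assumes. The overflow case is even worse: with $m_0=F_3$, $h=7$, one gets $E=G_{\alpha(n)}+D=2F_5=F_6+F_3$, whose legal decomposition already has a summand below index $\alpha(n)$ (the carry is \emph{not} absorbed inside the window), and $m'+h=18=F_6+F_4$ gives $s(m'+h)=2\neq 3=s(t(x))+s(E)$. In both examples the lemma's inequalities still hold ($s(x)-s_3(m)-s(t(x))$ equals $0$ and $1$ respectively), but not for the reason you give; a correct proof must track how merges and downward spills across index $\alpha(n)$ change the summand count (they are confined to $O(L)$ boundary positions) rather than assert that they do not occur. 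To be fair, the paper's own assertion that the decompositions of $x$ and $t(x)$ agree below index $\alpha(n)$ is strained by the same examples, but that does not rescue your write-up: as it stands, the key identity it rests on is both unproven and false. (A minor separate slip: your interim bound $0\le B<q(n)$ ignores that coefficients can be as large as $K$.)
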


\begin{proof} First, note that the number of summands in the decomposition of $x$ with indices $i\in [\alpha(n)$, $\alpha(n)$ $+$ $q(n))$ must be less than $Kq(n)$. Next, we verify that $t$ from above in fact is a bijection, for which it suffices to show that it is an injection.  Note that if $t(m+h_1)=t(m+h_2)$, then since $|h_1-h_2|<G_{\alpha(n)}$, we must have $m_0+h_1=m_0+h_2$, so we may conclude that $t$ is injective, hence bijective since the domain and codomain are finite.  For any $x\in [m,m+G_{\alpha(n)})$, the decompositions of $t(x)$ and $x$ agree for the terms with index less than $\alpha(n)$ by virtue of Corollary \ref{greedy}.  Furthermore, the decompositions of $x$ and $m$ agree for terms with index greater than $\alpha(n)+q(n)$.  Therefore, the number of summands in the decomposition of $x$ with indices $i\in [\alpha(n),\alpha(n)+q(n))$ is equal to $s(x)-s_3(m)-s(t(x))$. Combining this with our initial observation, the lemma now follows.
\end{proof}

As a result of this lemma, the distribution of $s(x)$ over the integers in $[m,m+G_{\alpha})$ is a shift of its distribution over $[0,G_{\alpha(n)})$, up to an error bounded by $q(n)$.  With this fact, we are now ready to prove the main theorem.

%%%%%%%%%%%%%%%%%%%%%%%%%%%%%%%%%%%%%%%%%%%%%%%%%%%%%%%%%%%%%%%%%%%%%%%%%%%%%%%%%%%%%%%%%%%%%%%%%%%%%%%%%%%%%%%%%%%%%%%%%%%%%%%%%%%%%%%%%%%%%%%%

%%%%%%%%%%%%%%%%%%%%%%%%%%%%%%%%%%%%%%%%%%%%%%%%%%%%%%%%%%%%%%%%%%%%%%%%%%%%%%%%%%%%%%%%%%%%%%%%%%%%%%%%%%%%%%%%%%%%%%%%%%%%%%%%%%%%%%%%%%%%%%%%

%%%%%%%%%%%%%%%%%%%%%%%%%%%%%%%%%%%%%%%%%%%%%%%%%%%%%%%%%%%%%%%%%%%%%%%%%%%%%%%%%%%%%%%%%%%%%%%%%%%%%%%%%%%%%%%%%%%%%%%%%%%%%%%%%%%%%%%%%%%%%%%%

%%%%%%%%%%%%%%%%%%%%%%%%%%%%%%%%%%%%%%%%%%%%%%%%%%%%%%%%%%%%%%%%%%%%%%%%%%%%%%%%%%%%%%%%%%%%%%%%%%%%%%%%%%%%%%%%%%%%%%%%%%%%%%%%%%%%%%%%%%%%%%%%

\section{Proof of Theorem \ref{thm:mainthm}}\label{sec:proofofthmmain}

We now prove our main result. We assume below that $\{G_n\}$ is a PLRS with $c_1 \ge c_2 \ge \cdots \ge c_L \ge 1$, that $\alpha(n)$ obeys \eqref{eq:defnalphan}, and that $q(n)$ obeys \eqref{eq:defnqn}.

\begin{proof}[Proof of Theorem \ref{thm:mainthm}]
For a fixed $m\in [G_n,G_{n+1})$ with at least $3L$ consecutive 0's somewhere in $C_2(m)$ (which is true for almost all $m$ as $n\to\infty$), we define random variables $X_n$ and $Y_n$ by
\begin{align}
& X_n \ := \ s(H_1), \ \ \
Y_n \ := \ s(H_2),
\end{align}
where $H_1$ is chosen uniformly at random from $[m,m+G_{\alpha(n)})$ and $H_2=H_1-m$ (and thus is chosen uniformly at random from $[0,G_{\alpha(n)})$). Let
\begin{align}
& X_n' \ := \ \frac{1}{\sigma_x(n)}(X_n-\mathbb{E}[X_n]),
\intertext{and}
& Y_n' \ := \ \frac{1}{\sigma_y(n)}(Y_n-\mathbb{E}[Y_n]),
\end{align}
where $\sigma_x(n)$ and $\sigma_y(n)$ are the standard deviations of $X_n$ and $Y_n$, respectively, so that $X_n$ and $Y_n$ are normalized with mean 0 and variance 1.  It is known that $Y_n'$ converges to a Gaussian distribution with mean and variance of order $\alpha(n)$ (see, for example, \cite{MW1}), and we claim that $X_n'$ converges to a Gaussian distribution as well.

Note that $X_n=Y_n+C+e(n)$, where $C$ is a constant and $e(n)$ is an error term with $|e(n)|<Kq(n)$.  From the assumption that $q(n)=o\left(\sqrt{\alpha(n)}\right)$ we have $q(n)/\sigma_y(n) \to 0$.  Therefore, we have
\begin{align}
\v[X_n] \ & = \ \v[Y_n]+\v[e(n)]+2\text{Cov}[Y_n,e(n)] \nonumber \\
& \leq \ \v[Y_n]+4K^2q(n)^2+2\E[(Y_n-\E[Y_n])(e(n)-\E[e(n)])] \nonumber \\
& \leq \ \v[Y_n]+4K^2q(n)^2+2\E[|Y_n-\E[Y_n]|\cdot |e(n)-\E[e(n)]|] \nonumber \\
& \leq \ \v[Y_n]+4K^2q(n)^2+4Kq(n)\E[|Y_n-\E[Y_n]|] \nonumber \\
& \leq \ \v[Y_n]+4K^2q(n)^2+4Kq(n)\E[(Y_n-\E[Y_n])^2]^{1/2} \nonumber \\
& = \ \v[Y_n]+4K^2q(n)^2+4Kq(n)+4Kq(n)\sigma_y(n) \nonumber \\
& = \ \v[Y_n](1+o(1)),
\end{align}
Hence $\sigma_x(n)=\sigma_y(n)(1+o(1))$.  Note that $|X_n-Y_n-C|<Kq(n)$, so $|X_n'\sigma_x(n)-Y_n'\sigma_y(n)|=O(q(n))$, and thus
\begin{align}
|X_n'-Y_n'|\  = \ \left|X_n'\frac{\sigma_x(n)}{\sigma_y(n)}-Y_n'\right|(1+o(1)) \ = \ O\left(\frac{q(n)}{\sigma_y(n)}\right)\ = \ o(1).
\end{align}

Let $A_n$ and $B_n$ be the cumulative distribution functions for $X_n'$ and $Y_n'$, respectively.  By Lemma \ref{error} combined with the above bound on $|X_n'-Y_n'|$, we have
\begin{align}
B_n(x-o(1)) \ \leq \ A_n(x) \ \leq \ B_n(1+o(1)).
\end{align}

Since $\{B_n\}_n$ converges pointwise to the cumulative distribution function for a Gaussian distribution, say $B(x)$, and it is easy to show that if $\beta_n\to 0$, then $B_n(x+\beta_n)\to B(x)$ since $B$ is continuous and $B_n$ is monotone non-decreasing, it follows that $\{A_n\}_n$ also converges pointwise to $B(x)$.  This completes the proof.
\end{proof}

%%%%%%%%%%%%%%%%%%%%%%%%%%%%%%%%%%%%%%%%%%%%%%%%%%%%%%%%%%%%%%%%%%%%%%%%%%%%%%%%%%%%%%%%%%%%%%%%%%%%%%%%%%%%%%%%%%%%%%%%%%%%%%%%%%%%%%%%%%%%%%%%

%%%%%%%%%%%%%%%%%%%%%%%%%%%%%%%%%%%%%%%%%%%%%%%%%%%%%%%%%%%%%%%%%%%%%%%%%%%%%%%%%%%%%%%%%%%%%%%%%%%%%%%%%%%%%%%%%%%%%%%%%%%%%%%%%%%%%%%%%%%%%%%%

%%%%%%%%%%%%%%%%%%%%%%%%%%%%%%%%%%%%%%%%%%%%%%%%%%%%%%%%%%%%%%%%%%%%%%%%%%%%%%%%%%%%%%%%%%%%%%%%%%%%%%%%%%%%%%%%%%%%%%%%%%%%%%%%%%%%%%%%%%%%%%%%

%%%%%%%%%%%%%%%%%%%%%%%%%%%%%%%%%%%%%%%%%%%%%%%%%%%%%%%%%%%%%%%%%%%%%%%%%%%%%%%%%%%%%%%%%%%%%%%%%%%%%%%%%%%%%%%%%%%%%%%%%%%%%%%%%%%%%%%%%%%%%%%%

\section{Conclusion and Future Work}

By finding a correspondence between generalized Zeckendorf decompositions in the interval $[m, m + G_{\alpha(n)})$ and in the interval $[0, G_{\alpha(n)})$, we are able to prove convergence to Gaussian behavior on many sub-intervals. The key step is to show that almost surely an integer $m$ chosen uniformly at random from $[G_n, G_{n+1})$ permits the construction of a bijection onto the interval $[0, G_{\alpha(n)})$. Our results then follow from previous work on the Gaussian behavior of the number of generalized Zeckendorf summands in this interval.

In the future, we plan to extend our results to more general sequences. The first natural candidate is to remove the assumption on the $c_i$'s among the positive linear recurrence sequences we study. Other interesting topics include the signed decompositions (or far difference representations) where both positive and negative summands are allowed (see \cite{Al, DDKMV}), the $f$-decompositions of \cite{DDKMMV}, and some other recurrences where the leading coefficient is zero (which in some cases leads to a loss of unique decompositions), such as \cite{CHMN1, CHMN2, CHMN3}.

\appendix

%%%%%%%%%%%%%%%%%%%%%%%%%%%%%%%%%%%%%%%%%%%%%%%%%%%%%%%%%%%%%%%%%%%%%%%%%%%%%%%%%%%%%%%%%%%%%%%%%%%%%%%%%%%%%%%%%%%%%%%%%%%%%%%%%%%%%%%%%%

%%%%%%%%%%%%%%%%%%%%%%%%%%%%%%%%%%%%%%%%%%%%%%%%%%%%%%%%%%%%%%%%%%%%%%%%%%%%%%%%%%%%%%%%%%%%%%%%%%%%%%%%%%%%%%%%%%%%%%%%%%%%%%%%%%%%%%%%%%

%%%%%%%%%%%%%%%%%%%%%%%%%%%%%%%%%%%%%%%%%%%%%%%%%%%%%%%%%%%%%%%%%%%%%%%%%%%%%%%%%%%%%%%%%%%%%%%%%%%%%%%%%%%%%%%%%%%%%%%%%%%%%%%%%%%%%%%%%%

%%%%%%%%%%%%%%%%%%%%%%%%%%%%%%%%%%%%%%%%%%%%%%%%%%%%%%%%%%%%%%%%%%%%%%%%%%%%%%%%%%%%%%%%%%%%%%%%%%%%%%%%%%%%%%%%%%%%%%%%%%%%%%%%%%%%%%%%%%

%%%%%%%%%%%%%%%%%%%%%%%%%%%%%%%%%%%%%%%%%%%%%%%%%%%%%%%%%%%%%%%%%%%%%%%%%%%%%%%%%%%%%%%%%%%%%%%%%%%%%%%%%%%%%%%%%%%%%%%%%%%%%%%%%%%%%%%%%%

\section{Elementary Proof of Moderate Gaps}\label{sec:appproofmodgaps}

A crucial ingredient in our proof is that almost surely the Zeckendorf decomposition of an $m \in [G_n, G_{n+1})$ has a gap of length $Z$ or more for some fixed $Z > 3L$. This follows immediately from the work of Beckwith, Bower, Gaudet, Insoft, Li, Miller and Tosteson \cite{BBGILMT} (see also \cite{B-AM}), who showed that almost surely the longest gap is of the order $\log n$. It is possible to elementarily prove this result by deriving a recurrence relation for these probabilities and analyzing its growth rate directly, which we do below both in the hopes that it might be of use in related problems, and also to keep the argument elementary.

\begin{thm} Let $\{G_n\}$ be a PLRS with recurrence relation with positive integer coefficients: \be G_{n+1} \ = \ c_1 G_n + \cdots + c_L G_{n-L+1}, \ \ \ c_i, L \ge 1. \ee Fix an integer $Z > L$ and let $H_{n+1}$ be the number of integers $m\in [0, G_{n+1})$ such that $m$'s legal decomposition does not have a gap between summands of length $Z$ or greater.\footnote{Note that we only care about a gap of length at least $Z$ between adjacent summands; thus our decomposition may miss many elements before the first chosen summand. It is also easier to first count in $[0, G_{n+1})$ and later shift to $[G_n, G_{n+1})$ by subtraction.} Then $H_n/G_n$ converges to zero exponentially fast. \end{thm}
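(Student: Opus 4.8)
The plan is to set up a linear recurrence for the counting function $H_{n+1}$ and then extract its exponential decay rate by comparing it to the recurrence governing $G_{n+1}$. The key observation is that $H_{n+1}$ counts legal decompositions of integers in $[0, G_{n+1})$ that avoid a gap of length $Z$ or greater, and such decompositions can be classified by the location and content of their leading block. Specifically, I would condition on the position of the highest-index summand together with the initial string of coefficients, using the block structure from Condition 2 of Definition \ref{def:goodrecurrence}. Because a forbidden configuration is precisely a gap of $Z$ or more consecutive zeros, the "gap-free" decompositions counted by $H_{n+1}$ satisfy a recurrence in which each term $G_{n+1-j}$ may only be followed, after its block, by a gap of length at most $Z-1$ before the next nonzero coefficient. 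This locality is what converts the avoidance condition into a finite-depth linear recurrence.

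First I would write down the recurrence for $H_{n+1}$ explicitly. Decomposing over the first legal block (the leading $a_1 G_n + \cdots$ segment dictated by the $c_i$ via \eqref{eq:legalcondition2}) and the permissible gap of length between $0$ and $Z-1$ that can follow it, I expect a relation of the schematic form
\begin{align}
H_{n+1} \ = \ \sum_{k} d_k H_{n+1-k},
\end{align}
where the sum ranges over a bounded window of indices (of width controlled by $L$ and $Z$) and the coefficients $d_k$ are determined by the number of legal leading blocks of each length together with the allowed short gaps. The point is that this is a \emph{fixed-order} linear recurrence with constant coefficients, so its solutions grow like $\lambda^n$ for $\lambda$ the dominant root of its characteristic polynomial. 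I would then compare this characteristic polynomial against the characteristic polynomial of the $G_n$ recurrence $x^L = c_1 x^{L-1} + \cdots + c_L$, whose dominant root I will call $\rho > 1$.

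The crux is to prove that the dominant root $\lambda$ of the $H$-recurrence is strictly smaller than $\rho$. The heuristic is clear: $H_{n+1}$ counts a proper subset of all legal decompositions (those with a long gap are excluded), and excluding a positive-density family of configurations should strictly lower the growth rate. To make this rigorous I would argue that the $H$-recurrence is a truncation of the $G$-recurrence in which the longest allowed gap is capped at $Z-1$; removing the terms corresponding to gaps of length $\ge Z$ strictly decreases the coefficients summed into the recurrence, and by a Perron--Frobenius / monotonicity argument on the associated nonnegative companion matrix, strictly smaller nonnegative coefficients force a strictly smaller spectral radius (using irreducibility, which holds since $c_1, c_L \ge 1$). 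Then $H_n / G_n \asymp (\lambda/\rho)^n \to 0$ exponentially fast, as claimed.

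The main obstacle I anticipate is not the asymptotic extraction but the bookkeeping that produces the correct recurrence for $H_{n+1}$: one must carefully enumerate legal leading blocks, account for the boundary (Condition 1, small $n < L$) initial terms, and ensure that truncating a gap at length $Z-1$ preserves legality of the remaining tail — here the hypothesis $c_1 \ge c_2 \ge \cdots \ge c_L$ is not needed, but the legality analysis from Condition 2 must be threaded through correctly so that the window width and the coefficients $d_k$ are right. Once the recurrence is correctly established, the strict inequality $\lambda < \rho$ and the exponential decay follow from standard spectral comparison, so I would devote most of the care to the combinatorial setup and comparatively little to the final limit.
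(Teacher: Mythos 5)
Your overall plan---derive a constant-coefficient linear recurrence for the gap-avoiding count and then show its dominant root is strictly smaller than that of $\{G_n\}$---is exactly the paper's strategy, and your side remark that the hypothesis $c_1 \ge \cdots \ge c_L$ is not needed here is correct. The genuine gap is in the step you dismiss as ``standard spectral comparison.'' Any recurrence you obtain by conditioning on a leading block followed by a permitted gap of length $0,1,\dots,Z-1$ has order roughly $L+Z$: it has strictly positive coefficients at lags \emph{larger} than $L$, coming from configurations ``block of length up to $L$ followed by a nonzero gap.'' The recurrence for $G_{n+1}$ has order $L$. Companion matrices of different sizes cannot be compared entrywise, and if you pad the $G$-recurrence with zero coefficients out to order $L+Z$, the domination goes the wrong way: your $H$-recurrence has positive entries precisely at lags where the padded $G$-recurrence has zeros, so the entrywise inequality needed for Perron--Frobenius monotonicity fails. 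The ``truncation'' picture you invoke is valid only for the infinite renewal expansion of the legal-string count as a sum over \emph{all} gap lengths $0,1,2,\dots$; that object has no finite companion matrix, so the argument as stated does not go through. It can be repaired---for instance by comparing the smallest positive roots of the truncated versus full renewal series, or by building both automata on one common state space that caps the recorded gap length at $Z$ (where the full automaton keeps a ``long gap'' state and the restricted one deletes it, restoring entrywise domination and irreducibility)---but that is a genuinely different and more delicate argument than coefficient-wise comparison of the two recurrences you wrote down, and it is the crux of the proof, not an afterthought.

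For contrast, the paper confronts exactly this mismatch of orders and resolves it differently. Its case analysis on the leading coefficients yields the mixed-sign recurrence $H_{n+1} = c_1(H_n - H_{n-Z}) + \cdots + c_L(H_{n-L+1} - H_{n-L+1-Z}) + H_{n-L+1-Z}$; it passes to $\widetilde{H}_{n+1} = H_{n+1}-H_n$ (your leading-zeros issue: the clean count is of integers new at level $n$, since $H$ allows the first summand to sit anywhere), and then uses the Generalized Binet Formula bound $H_{n-\ell-Z} \ge \alpha H_{n-\ell}$, $\alpha \in (0,1)$, to majorize $\widetilde{H}$ by a sequence $\widehat{H}$ satisfying an \emph{order-$L$} recurrence with coefficients $\widetilde{c}_\ell \le c_\ell$, strict for $\ell < L$. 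Only then, with two recurrences of the same order, does it compare dominant roots, and even there the argument is a one-line evaluation (if the roots coincided, $\omega^L = \widetilde{c}_1\omega^{L-1} + \cdots + \widetilde{c}_L < c_1\omega^{L-1} + \cdots + c_L = \omega^L$, a contradiction) rather than Perron--Frobenius. So the portion of your proposal to which you planned to devote ``comparatively little'' care is precisely where the real work lies.
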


\begin{proof} We first find a recurrence relation for $\{H_n\}$. Consider $n$ much larger than $L+Z$, and let $m\in [0, G_{n+1})$ be arbitrary. We count how many $m$ have a legal decomposition with no gap between summands of length $Z$ or more by looking at the possible beginning strings of $m$'s decomposition. Specifically, we look at how often $G_n, G_{n-1}, \dots, G_{n-L+1}$ occur. As the analysis is trivial when $L=1$ we assume $L \ge 2$ below.

\begin{itemize}

\item \textbf{Case $n$: At most $c_1-1$ summands of $G_n$:} We may have 0, 1, $\dots$, $c_1-1$ or $c_1$ occurrences of $G_n$. If we have zero $G_n$'s, then the number of possible completions of the decomposition such that all gaps are less than $Z$ is, by definition, $H_n$. If we have 1, $\dots$, $c_1-1$ summands of $G_n$ then the number of completions is $H_n - H_{n-Z}$ (we must subtract $H_{n-Z}$ as we have a summand, and we cannot have $Z$ or more non-chosen summands now). Thus the contribution from this case to $H_{n+1}$ is \be H_n + (c_1-1)(H_n - H_{n-Z}) \ = \ c_1 (H_n - H_{n-Z}) + H_{n-Z}.\ee

\item \textbf{Case $n-1$: At most $c_2-1$ summands of $G_{n-1}$:} To be in this case, we first have $c_1$ summands of $G_n$. If we have no $G_{n-1}$ terms in our decomposition, then we have $H_{n-1} - H_{n-1-(Z-1)} = H_{n-1} - H_{n-Z}$ ways to complete the decomposition (as we have zero summands, we must be careful and avoid not taking any of the next $Z-1$ summands). If we have 1, $\dots$, $c_2-1$ copies of $G_{n-1}$ then there are $H_{n-1} - H_{n-1-Z}$ ways to complete the decomposition as desired. Thus this case contributes to $H_{n+1}$ \bea & &  (H_{n-1} - H_{n-Z}) + (c_2-1)(H_{n-1} - H_{n-1-Z}) \nonumber\\ & & \ \ \ \ \ \ = \ c_2 (H_{n-1} - H_{n-1-Z}) + (H_{n-1-Z}-H_{n-Z}). \eea

\item \textbf{Case $n-\ell$: At most $c_\ell-1$ summands of $G_{n-\ell+1}$:} Similar to the earlier cases, if we have zero copies of $G_{n-\ell+1}$ then there are $H_{n-\ell}-H_{n-\ell-(Z-1)} = H_{n-\ell}-H_{n-\ell+1-Z}$ ways to complete the decomposition, while if there are 1, $\dots$, $c_\ell-1$ copies of $G_{n-\ell+1}$  there are $H_{n-\ell}-H_{n-\ell-Z}$ possibilities. Thus the total contribution to $H_{n+1}$ is \bea & & (H_{n-\ell}-H_{n-\ell+1-Z}) + (c_\ell-1)(H_{n-\ell}-H_{n-\ell-Z})\nonumber\\ & & \ \ \ \ \ \ = \ c_\ell (H_{n-\ell}-H_{n-\ell-Z}) + (H_{n-\ell-Z}-H_{n-\ell+1-Z}).\eea If $\ell < L$ we continue to the next case with now $c_\ell$ copies of $G_{n-\ell+1}$, while if $\ell=L$ then the process terminates, as the recurrence relation and our definition of legality  means we would replace this beginning string with $c_1 G_n + \cdots + c_L G_{n-L+1}$ with $G_{n+1}$, contradicting both the fact that we have a legal decomposition and that our number is less than $G_{n+1}$.

\end{itemize}

Combining the above, we obtain the recurrence for $\{H_n\}$ (notice that we have a telescoping sum) \be\label{eq:recurrenceforHnplus1} H_{n+1} \ = \ c_1 (H_n - H_{n-Z}) + \cdots + c_L (H_{n-L+1} - H_{n-L+1-Z}) + H_{n-L+1-Z}. \ee Setting $\widetilde{H}{_{n+1}} = H_{n+1} - H_n$, we see $\widetilde{H}{_{n+1}}$ counts the number of integers in $[G_n, G_{n+1})$ with no gaps of length $Z$ or more, and using \eqref{eq:recurrenceforHnplus1} with $n$ and $n-1$ gives us \eqref{eq:recurrenceforHnplus1} with each $H_k$  replaced with $\widetilde{H}_k$.  As $\{\widetilde{H}_n\}$ is a strictly increasing sequence, the largest eigenvalue of its recurrence must be greater than 1. From the Generalized Binet Formula  expansion for $H_n$  (see, for example, Theorem A.1 of \cite{BBGILMT}), there is some constant $\alpha \in (0,1)$ such that $H_{n-\ell-Z} \ge \alpha H_{n-\ell}$. Letting \be \twocase{\widetilde{c}{_\ell} \ = \ }{c_\ell(1-\alpha)}{if $1 \le \ell < L$}{c_L}{if $\ell = L$,} \ee we find \bea \widetilde{H}_{n+1} \ < \ \widetilde{c}{_1} \widetilde{H}_1 + \cdots + \widetilde{c}{_L} \widetilde{H}_{n-L+1}. \eea

Notice this is almost the same recurrence as that of $G_n$, the only difference being that at least one of the coefficients $\widetilde{c}{_\ell}$ is smaller than $c_\ell$. To prove $\{\widetilde{H}_n\}$ grows exponentially slower than $\{G_n\}$, we instead study the sequence $\{\widehat{H}_n\}$ which satisfies the recurrence \bea \widehat{H}_{n+1} \ = \ \widetilde{c_1}\widehat{H}_n + \cdots + \widetilde{c_L}\widehat{H}_{n-L+1}, \eea as $\widetilde{H}_n \le \widehat{H}_n$.

We use many standard properties of the Generalized Binet Formula expansions below; see Theorem A.1 of \cite{BBGILMT} for statements and proofs. The solution to a linear recurrence of fixed, finite length is of the form \be \beta_{1,r_1} n^{r_1} \lambda_1^n + \cdots + \beta_{1,0} \lambda_1^n + \cdots + \beta_{k,r_k} n^{r_k} \lambda_k^n + \cdots + \beta_{k,0} \lambda_k^n. \ee As the $c_i$'s are positive, there is a unique positive  root $\lambda > 1$ for $\{G_n\}$, and all other roots are less than 1 in absolute value. Thus $G_n = \beta \lambda^n + O(|n^r \lambda_2|^n)$ for some $0 < |\lambda_2| < \lambda$ (if $\beta=0$ then $G_n$ is exponentially decaying). As all the coefficients of the recurrence for $\{\widehat{H}_n\}$ are positive, there is also a unique root of largest absolute value.

We claim $\omega < \lambda$. Clearly $\omega \le \lambda$ as this sequence grows slower; if they were equal then $\omega$ would be a root of both characteristic polynomials, and we would find \bea \omega^L \ = \  \widetilde{c}_1 \omega^{L-1} + \cdots + \widetilde{c}_L \ < \ c_1 \omega^{L-1} + \cdots + c_L \ = \ \omega^L, \eea a contradiction. Thus $\widehat{H}_n$ grows exponentially slower than $G_n$, which completes the proof.\end{proof}

%%%%%%%%%%%%%%%%%%%%%%%%%%%%%%%%%%%%%%%%%%%%%%%%%%%%%%%%%%%%%%%%%%%%%%%%%%%%%%%%%%%%%%%%%%%%%%%%%%%%%%%%%%%%%%%%%%%%%%%%%%%%%%%%%%%%%%%%%%%%%%%%

%%%%%%%%%%%%%%%%%%%%%%%%%%%%%%%%%%%%%%%%%%%%%%%%%%%%%%%%%%%%%%%%%%%%%%%%%%%%%%%%%%%%%%%%%%%%%%%%%%%%%%%%%%%%%%%%%%%%%%%%%%%%%%%%%%%%%%%%%%%%%%%%

%%%%%%%%%%%%%%%%%%%%%%%%%%%%%%%%%%%%%%%%%%%%%%%%%%%%%%%%%%%%%%%%%%%%%%%%%%%%%%%%%%%%%%%%%%%%%%%%%%%%%%%%%%%%%%%%%%%%%%%%%%%%%%%%%%%%%%%%%%%%%%%%

%%%%%%%%%%%%%%%%%%%%%%%%%%%%%%%%%%%%%%%%%%%%%%%%%%%%%%%%%%%%%%%%%%%%%%%%%%%%%%%%%%%%%%%%%%%%%%%%%%%%%%%%%%%%%%%%%%%%%%%%%%%%%%%%%%%%%%%%%%%%%%%%

\ \\

\end{document}